\newtheorem{thm}{Theorem}[section]
\newtheorem{cor}[thm]{Corollary}
\newtheorem{prop}[thm]{Proposition}
\theoremstyle{definition}
\newtheorem{defin}[thm]{Definition}
\newtheorem{remark}[thm]{Remark}
\theoremstyle{remark}
\newtheorem*{remex}{Example}
\newtheorem*{ack}{Acknowledgments}
\def\nd{\noindent}
\def\C{{\mathbb C}}
\begin{document}

\author[A.~Bahri]{A.~Bahri}
\address{Department of Mathematics,
Rider University, Lawrenceville, NJ 08648, U.S.A.}
\email{bahri@rider.edu}

\author[M.~Bendersky]{M.~Bendersky}
\address{Department of Mathematics
CUNY,  East 695 Park Avenue New York, NY 10065, U.S.A.}
\email{mbenders@xena.hunter.cuny.edu}

\author[F.~R.~Cohen]{F.~R.~Cohen}
\address{Department of Mathematics,
University of Rochester, Rochester, NY 14625, U.S.A.}
\email{cohf@math.rochester.edu}

\author[S.~Gitler]{S.~Gitler}
\address{Department of Mathematics,
Cinvestav, San Pedro Zacatenco, Mexico, D.F. CP 07360 
Apartado
Postal 14-740, Mexico} 

\title[On the free loop space of a toric space]{On the free loop spaces of  a toric space}

\

\subjclass[2000]{Primary: 55P62, 55P35, Secondary: 52B11,55U10}

\keywords{ rational homotopy, free loop space, rationally elliptic and hyperbolic, moment--angle complex,
Davis-Januszkiewicz space.}

\

\begin{abstract}
In this note, it is shown that the Hilbert-Poincar\'e series for the 
rational homology of the free loop space on a moment-angle complex is a rational function if and only if the
moment-angle complex is a product of odd spheres and a disk. A  partial  result is included
for the Davis-Januszkiewicz spaces.  The opportunity is taken to correct the result 
\cite[\;Theorem $1.3\hspace{0.6mm}$]{bbcg.rational} which used a theorem from \cite{bj}.
\end{abstract}

\maketitle

{\bf This paper is dedicated to Samuel Gitler Hammer who brought us much joy and interest in Mathematics. }

\section{Introduction}\label{Introduction}

Let $$Z_K = Z(K;(D^2,S^1))$$ be a moment-angle complex, 
(a special case of a polyhedral product), 
where $K$ is a finite simplicial complex with $m$ vertices of dimension $n-1$ \cite{ Buchstaber-Panov, 
Buchstaber-Panov.2,  bbcg.1}. In the special cases for which $K$ is a polytopal sphere, $Z_K $ is a manifold with orbit space given by a simple convex polytope $$P^n(K)  = Z_K/T^m$$ where the torus of rank $m$, $T^m$, acts naturally on $Z_K$.
The topology/geometry of the free loop space of the Davis-Januszkiewicz space
$DJ(K) = ET^m \times_{T^m}Z_K$ and related spaces here is tightly tied to the geometry of $P^n(K)$.

\

F\'elix and Halperin showed, \cite{fh} and \cite{fht}, that there is a dichotomy for simply-connected 
finite $CW$-complexes $X$. Their theorem is the following.
\newpage
\begin{thm}\label{thm:dichotomy}
Either
\begin{enumerate}
\item[(1)] {\bf  $\pi_*(X) \otimes  \mathbb{Q}$} is a finite $\mathbb{Q}$-vector space, in which case $X$ is called
rationally elliptic or,
\item[(2)]  {\bf  $\pi_*(X) \otimes  \mathbb{Q}$} grows exponentially, in which case $X$ is called  rationally hyperbolic.
\end{enumerate}
\end{thm}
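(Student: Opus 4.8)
The plan is to translate the statement into rational homotopy theory and to analyze the homotopy Lie algebra $L = L_X := \pi_*(\Omega X)\otimes\mathbb{Q}$, graded so that $L_{k-1}\cong\pi_k(X)\otimes\mathbb{Q}$. By Milnor--Moore the loop space homology is its universal enveloping algebra, $H_*(\Omega X;\mathbb{Q})\cong UL$, and Poincar\'e--Birkhoff--Witt gives
\[
\sum_i \bigl(\dim H_i(\Omega X;\mathbb{Q})\bigr)\, t^i \;=\; \frac{\prod_{i\ \mathrm{odd}}\,(1+t^{i})^{\dim L_i}}{\prod_{i\ \mathrm{even}}\,(1-t^{i})^{\dim L_i}}.
\]
Thus the theorem is equivalent to the algebraic dichotomy: either $\dim L<\infty$ (which is Case (1), the definition of rationally elliptic), or the partial dimensions $\sum_{i\le n}\dim L_i$ grow exponentially in $n$ (Case (2)). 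I would first fix a minimal Sullivan model $(\Lambda V,d)$ of $X$ with $V\cong\mathrm{Hom}(\pi_*(X),\mathbb{Q})$, equivalently a Quillen model $(\mathbb{L}(W),\partial)$ with $W\cong s^{-1}\widetilde H_*(X;\mathbb{Q})$ and $H_*(\mathbb{L}(W),\partial)\cong L_X$, so that finiteness of $H^*(X;\mathbb{Q})$ becomes finiteness of the generating space $W$ together with a well-defined formal dimension $n=\max\{k:H^k(X;\mathbb{Q})\ne 0\}$.

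If $\dim L_X<\infty$ there is nothing to do, so suppose $\dim L_X=\infty$; one must produce $C>1$ with $\sum_{i\le n}\dim L_i\ge C^n$ for all large $n$. The essential structural input --- and the only place the finiteness of $X$ is used in a non-formal way --- is a bound on the \emph{depth} of the homotopy Lie algebra, $\mathrm{depth}\,L_X\le \mathrm{cat}_0(X)\le \dim H^*(X;\mathbb{Q})<\infty$, where $\mathrm{depth}\,L_X$ is the least $m$ with $\mathrm{Ext}^{m}_{UL_X}(\mathbb{Q},UL_X)\ne 0$ and $\mathrm{cat}_0$ is rational Lusternik--Schnirelmann category; this is the Félix--Halperin--Thomas inequality (see \cite{fht}), proved via the Sullivan-model computation of $\mathrm{cat}_0$. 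I would then invoke the purely Lie-algebraic dichotomy: a connected graded Lie algebra of finite type over $\mathbb{Q}$ with finite depth is either finite-dimensional or has exponentially growing partial dimensions --- the hyperbolic alternative being obtained by showing that otherwise $\mathrm{Ext}^{>\mathrm{depth}}_{UL_X}(\mathbb{Q},UL_X)$ could not vanish, or equivalently by exhibiting, after discarding finitely many low degrees, a free graded Lie algebra on at least two generators as a subquotient of $L_X$. Transporting the conclusion back along $L_{k-1}\cong\pi_k(X)\otimes\mathbb{Q}$ gives exponential growth of $\pi_*(X)\otimes\mathbb{Q}$, i.e.\ Case (2).

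The main obstacle is precisely this depth step together with the Lie-algebraic dichotomy it powers: genuinely deducing exponential growth from bounded depth and infinite dimension. It rests on the Félix--Halperin--Thomas comparison between the Ext-depth of $UL_X$ and rational category, and on a delicate filtration argument for $L_X$ showing that vanishing of $\mathrm{Ext}^{>\mathrm{depth}}$ forces the dimensions $\dim L_i$, summed over windows of length roughly $n$, to multiply from one window to the next. Everything else is routine and I would only indicate it: the Milnor--Moore/Poincar\'e--Birkhoff--Witt identity displayed above, the elementary comparison of $\dim(UL)_{\le n}$ with a polynomial in $\sum_{i\le n}\dim L_i$, the dimension shift $\pi_k(X)\otimes\mathbb{Q}\cong(L_X)_{k-1}$, and the finiteness of $\mathrm{cat}_0(X)$ for a finite $CW$-complex.
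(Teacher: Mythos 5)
This statement is not proved in the paper at all: it is the F\'elix--Halperin dichotomy, quoted verbatim as background with references to \cite{fh} and \cite{fht}, so there is no internal proof to compare against. Your outline is essentially the standard argument from those sources --- pass to the homotopy Lie algebra $L_X=\pi_*(\Omega X)\otimes\mathbb{Q}$ via Milnor--Moore/PBW, bound $\mathrm{depth}\,L_X$ by the rational LS category (finite since $X$ is a finite simply connected complex; the cleaner bound is $\mathrm{cat}_0(X)\le \mathrm{cat}(X)\le \dim X$ rather than $\dim H^*(X;\mathbb{Q})$, a harmless imprecision), and then apply the F\'elix--Halperin--Thomas theorem that a finite-type graded Lie algebra of finite depth is either finite-dimensional or has partial dimensions growing exponentially in windows of length the formal dimension --- so it matches the cited proof rather than offering a new route, and correctly identifies the depth/Lie-algebraic dichotomy step as the genuinely hard ingredient.
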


\

The purpose of this note is to develop the dichotomy in the next Theorem  \ref{thm:rationality.first.result} arising 
from $LX$ the free loop space of a space $X$ together with the connections to $P^n(K)$. For a definition of the
term {\em exponential growth\/}, see  \cite[page 9]{mp}. 

\begin{thm} \label{thm:rationality.first.result}
The Hilbert-Poincar\'e series for the 
rational homology of $$LZ(K;(D^2,S^1))$$   has exponential growth 
 if and only if  $Z(K;(D^2,S^1))$ contains a wedge of two spheres as a  rational  retract, and so is hyperbolic. Thus the following are equivalent:

\begin{enumerate}
\item The Hilbert-Poincar\'e series for the rational homology of $LZ(K;(D^2,S^1))$  has sub-exponential growth.

\item The space $Z(K;(D^2,S^1))$ has totally finite rational homotopy groups, in other words $Z(K;(D^2,S^1))$ is elliptic.
\end{enumerate}

\end{thm}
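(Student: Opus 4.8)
The plan is to prove the displayed biconditional and the equivalence $(1)\Leftrightarrow(2)$ simultaneously, by splitting on a combinatorial dichotomy for $K$ and computing the growth of the Hilbert--Poincar\'e series of $H_*(LZ_K;\mathbb{Q})$ directly on each side; note first that $Z_K$ is always $2$-connected, so every sphere occurring below is simply connected. Write $SR(K)=\mathbb{Q}[v_1,\dots,v_m]/(v_\sigma:\sigma\ \text{a minimal non--face of}\ K)$ for the Stanley--Reisner algebra. Since the minimal non--faces of a join $K_1*K_2$ are the union of those of the two factors, and since a set of squarefree monomials is a regular sequence precisely when their supports are pairwise disjoint, the following are equivalent: the minimal non--faces of $K$ are pairwise disjoint; $SR(K)$ is a complete intersection; $K=\partial\Delta^{n_1}*\cdots*\partial\Delta^{n_k}*\Delta^{\ell}$; and $Z_K\cong S^{2n_1+1}\times\cdots\times S^{2n_k+1}\times D^{2(\ell+1)}$. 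In this ``disjoint'' case $Z_K$ is visibly rationally elliptic, so $(2)$ holds; and taking the standard Sullivan model $(\Lambda V\otimes\Lambda\bar V,D)$ of $LZ_K$, where $\bar V$ is a copy of the finite--dimensional space $V$ with degrees lowered by one, the Hilbert series of this model is a rational function with poles only at roots of unity, so the Betti numbers of $LZ_K$ grow at most polynomially and $(1)$ holds. Finally $Z_K$ contains no wedge of two spheres as a rational retract here: such a wedge is rationally hyperbolic, whereas a rational retract of an elliptic space is elliptic.

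It remains to show that the complementary ``non--disjoint'' case puts us on the hyperbolic side of all three statements. Suppose $\sigma\neq\tau$ are minimal non--faces with $\sigma\cap\tau\neq\emptyset$; set $I=\sigma\cup\tau$ and let $L=K_I$ be the induced subcomplex. I would first recall that the coordinate projection $(D^2)^m\to(D^2)^I$ restricts to a map $Z_K\to Z_L$ which splits the inclusion $Z_L\hookrightarrow Z_K$ given by inserting basepoints, so that $Z_L$ is a genuine retract of $Z_K$; hence it suffices to produce a wedge of two spheres as a rational retract of $Z_L$. Within $L$ the sets $\sigma,\tau$ are still minimal non--faces and all their proper subsets are faces, so $L_\sigma=\partial\Delta^{\sigma}$ and $L_\tau=\partial\Delta^{\tau}$ as full subcomplexes. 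Using the Buchstaber--Panov ring isomorphism $H^*(Z_L;\mathbb{Q})\cong\bigoplus_{J\subseteq I}\widetilde H^{\,*-|J|-1}(L_J)$, in which the product of a class supported on $J_1$ with a class supported on $J_2$ vanishes unless $J_1\cap J_2=\emptyset$, I obtain two classes $a\in H^{2|\sigma|-1}(Z_L;\mathbb{Q})$ and $b\in H^{2|\tau|-1}(Z_L;\mathbb{Q})$, pulled back from the fundamental classes of $Z_{\partial\Delta^{\sigma}}\cong S^{2|\sigma|-1}$ and $Z_{\partial\Delta^{\tau}}\cong S^{2|\tau|-1}$. They are indecomposable (no nonzero class is supported on a proper subset of $\sigma$ or of $\tau$, as the corresponding induced subcomplexes are simplices) and satisfy $a^2=b^2=ab=0$, the last equality because $\sigma\cap\tau\neq\emptyset$. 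Hence $\langle 1,a,b\rangle$ is a subalgebra isomorphic to $H^*(S^{2|\sigma|-1}\vee S^{2|\tau|-1};\mathbb{Q})$ whose vector--space complement is an ideal, and projection onto it is an algebra retraction.

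The main obstacle, as I see it, is to realize this cohomological retraction by an actual rational map $Z_L\to S^{2|\sigma|-1}\vee S^{2|\tau|-1}$ splitting the natural inclusion. I would do this by verifying that $Z_L$ is rationally a wedge of spheres: for each of the finitely many small complexes $L$ on the vertex set $\sigma\cup\tau$ that can arise, the decomposition above computes $H^*(Z_L;\mathbb{Q})$ by a short Hilbert--series bookkeeping, which in turn forces the attaching maps of all cells above the bottom spherical classes to be rationally null, so that $Z_L\simeq_{\mathbb{Q}}\bigvee S^{d_i}$; this is the place where \cite[Theorem~1.3]{bbcg.rational} is corrected, and where one may alternatively invoke known Golodness criteria under which $Z_K$ is rationally a co--$H$--space. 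Granting this, $S^{2|\sigma|-1}\vee S^{2|\tau|-1}$ is a rational retract of $Z_L$, hence of $Z_K$, which is the ``only if'' half of the displayed statement. For the converse and for $(1)\Leftrightarrow(2)$: if some wedge $S^a\vee S^b$ is a rational retract of $Z_K$, then $\pi_*(\Omega Z_K)\otimes\mathbb{Q}$ surjects onto the free graded Lie algebra on two generators, which is infinite--dimensional with exponentially growing ranks, so $Z_K$ is rationally hyperbolic (Theorem~\ref{thm:dichotomy}); and for a simply connected finite complex that is rationally hyperbolic, the rational homology of the free loop space has exponential growth by the theorem of F\'elix--Halperin--Thomas \cite{fht}. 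This completes the proof. (Alternatively, once $S^a\vee S^b$ is a rational retract of $Z_K$, functoriality of $X\mapsto LX$ makes $L(S^a\vee S^b)$ a rational retract of $LZ_K$, and $H_*(L(S^a\vee S^b);\mathbb{Q})$ is already of exponential growth.)
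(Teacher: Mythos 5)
The crux of the theorem is the implication ``$K$ has a pair of intersecting minimal non-faces $\Rightarrow$ a wedge of two spheres is a rational retract of $Z_{K}$,'' and this is exactly where your proposal has a genuine gap. You reduce to the full subcomplex $L=K_{\sigma\cup\tau}$ (the retraction $Z_{K}\to Z_{L}$ via \cite{ds} is fine), but your justification of the retract is the claim that $Z_{L}$ is rationally a wedge of spheres, argued by ``Hilbert-series bookkeeping'' over ``finitely many small complexes,'' or alternatively by ``known Golodness criteria.'' None of this holds up. There is no finite list: $\sigma\cup\tau$ can be arbitrarily large. Worse, the claim itself is false: take $\sigma=\{1,2,3\}$, $\tau=\{3,4,5\}$ and let $L$ on $\{1,\dots,5\}$ have minimal non-faces exactly $\sigma$, $\tau$, $\{1,4\}$, $\{2,5\}$; then $L_{\{1,2,4,5\}}$ is a $4$-cycle, so the two $3$-dimensional classes of $H^{*}(Z_{L};\mathbb{Q})$ supported on $\{1,4\}$ and $\{2,5\}$ have nonzero product (indeed $Z_{L_{\{1,2,4,5\}}}=S^{3}\times S^{3}$ is a retract of $Z_{L}$), so $Z_{L}$ is not even rationally a co-$H$ space. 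And the Golodness results you propose to fall back on are precisely the statement from \cite{bj} whose failure forced the correction carried out in Section \ref{sec:correction}. The paper's argument is genuinely different and avoids all of this: it introduces the class $\mathcal{A}_{m}$ of complexes with intersecting minimal non-faces but no proper full subcomplex with that property (minimality forces the vertex set to be $\sigma\cup\tau$), maps $S^{2(k+t)-1}\vee S^{2(r+t)-1}$ into $Z_{K}$ via the full-subcomplex retractions and the stable splitting of \cite{bbcg.1}, observes that the splitting forces any attaching map onto this wedge to be stably trivial, and then uses Hilton--Milnor together with the rational homotopy of spheres and the dimension count $2(k+r+t)-1<4(2k+3t+r-1)-1$ to exclude such classes; general $K$ is handled by induction on the number of vertices (Proposition \ref{prop:am}, Theorem \ref{thm:induction}). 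Your cup-product subalgebra observation does not substitute for this: as you yourself note, a splitting of $H^{*}$ does not by itself produce a space-level rational retraction.

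A secondary problem is your main route from ``wedge retract'' to exponential growth of $H_{*}(LZ_{K};\mathbb{Q})$: you invoke a general theorem that rational hyperbolicity of a simply connected finite complex implies exponential growth of the free loop space homology, attributed to \cite{fht}. No such general theorem is available; this is open, and \cite{Felix etal} gives only partial results. The correct argument --- the one the paper uses, and which you relegate to a final parenthesis --- is that a wedge of two simply connected spheres is coformal \cite{nm}, so Lambrechts' theorem \cite{Pascal} gives exponential growth for $H_{*}\bigl(L(S^{a}\vee S^{b});\mathbb{Q}\bigr)$, and functoriality of $L$ applied to the rational retraction transfers this to $LZ_{K}$. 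Your elliptic half is fine (the Sullivan-model bound, or equivalently the paper's $L(S^{2n+1})\simeq S^{2n+1}\times\Omega S^{2n+1}$ after inverting $2$), and the use of the F\'elix--Halperin dichotomy to get the equivalence of (1) and (2) matches the paper; but the retract step is the real content of the theorem and is not established by your proposal.
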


\

The previous theorem follows  by combining theorems of Pascal Lambrechts \cite{Pascal},  Neisendorfer and Miller
\cite{nm} together with Theorem $1.3$ of \cite{bbcg.rational}, which illustrates this dichotomy in the case  
of $Z_K.$ (The opportunity is taken here to correct this result  in Section \ref{sec:correction}.) The growth of free loop spaces has also been developed in \cite{Felix etal}. 

\

Gurvich in his thesis  \cite{Gurvich}  showed that in the case $K$ is  a  polytopal sphere, 
then $Z_K$ is elliptic if and only if
$P^n(K)$ is a product of simplices. (This result is generalized for any $K$ in \cite{bbcg.rational}).
The next corollary follows from Gurvich's result together with Theorem  \ref{thm:rationality.first.result}.

\begin{cor} \label{cor:rationality for polytopal macs}
Let $K$ be a polytopal sphere. 
Then following are equivalent:

\begin{enumerate}
\item The Hilbert-Poincar\'e series for the rational homology of $LZ(K;(D^2,S^1))$  has sub-exponential growth. 
\item The space $Z(K;(D^2,S^1))$ is elliptic, and so has totally finite rational homotopy groups.
\item The simple polytope $P^n(K)$ is a product of simplices.
\end{enumerate}

\end{cor}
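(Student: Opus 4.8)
The plan is to deduce the three equivalences by splicing together two facts that are already available: the equivalence of items (1) and (2) in Theorem \ref{thm:rationality.first.result}, and Gurvich's characterisation of ellipticity for polytopal moment-angle manifolds. No genuinely new argument is needed; the corollary is formal once these two inputs are in place.

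First observe that when $K$ is a polytopal sphere the moment-angle complex $Z_K = Z(K;(D^2,S^1))$ is a $2$-connected closed manifold, in particular a simply-connected finite $CW$-complex, so the F\'elix--Halperin dichotomy of Theorem \ref{thm:dichotomy} applies to it and the word ``elliptic'' is meaningful. Theorem \ref{thm:rationality.first.result} is proved for an arbitrary finite simplicial complex, so specialising it to our polytopal $K$ gives at once the equivalence $(1)\Leftrightarrow(2)$: the Hilbert-Poincar\'e series of $H_*(LZ_K;\mathbb{Q})$ has sub-exponential growth precisely when $Z_K$ has totally finite rational homotopy groups. For $(2)\Leftrightarrow(3)$ one invokes Gurvich's theorem \cite{Gurvich} in the form quoted above: for $K$ a polytopal sphere, $Z_K$ is rationally elliptic if and only if $P^n(K)$ is a product of simplices. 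Chaining the two equivalences closes the cycle $(1)\Leftrightarrow(2)\Leftrightarrow(3)$.

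The only point that asks for care is bookkeeping rather than topology: one should make sure that ``elliptic'' as used by Gurvich coincides with ``totally finite rational homotopy groups'' as used in Theorem \ref{thm:rationality.first.result}, and that the standing hypothesis ``$K$ a polytopal sphere'' is the one shared by both statements. As a cross-check against the abstract it is also worth recording the elliptic case explicitly: if $P^n(K) = \Delta^{n_1}\times\cdots\times\Delta^{n_k}$ then $K = \partial\Delta^{n_1}\ast\cdots\ast\partial\Delta^{n_k}$, and since the moment-angle functor carries a join of simplicial complexes to a product of polyhedral products while $Z(\partial\Delta^{j};(D^2,S^1)) = S^{2j+1}$, we get $Z_K \cong S^{2n_1+1}\times\cdots\times S^{2n_k+1}$, a product of odd spheres, which is patently rationally elliptic. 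Even this last point is routine verification rather than a genuine obstacle: the substance of the corollary lies entirely in the two imported theorems.
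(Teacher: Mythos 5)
Your argument is exactly the paper's: the corollary is obtained by combining Theorem \ref{thm:rationality.first.result} (for the equivalence of (1) and (2)) with Gurvich's theorem \cite{Gurvich} (for the equivalence of (2) and (3)), and your extra verification that a product of simplices yields a product of odd spheres is consistent with the paper's discussion. No gaps; this matches the intended proof.
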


\

In what follows,  a related theorem is stated in which  $Z(K;(D^2,S^1))$ is replaced by either $DJ(K)$ 
the associated Davis-Januszkiewicz space  or  mildly more general spaces.

\ 

Remarks addressing earlier work on irrational Hilbert-Poincar\'e series follow next. J.~E.~Roos first proved that the Hilbert-Poincar\'e series for the free loop space of $S^3 \vee S^3$ is irrational \cite{Roos}, following Serre's method for proving that the Hilbert-Poincar\'e series  for $\Omega^2(S^3 \vee S^3)$ is irrational \cite{serre}. One common theme here is the application of the Lech-Mahler-Skolem theorem which identifies whether certain infinite series are given by rational functions 
\cite{serre, Roos}. However, it is unclear whether these methods extend
directly to many of the cases in this paper.

\

A result due to Pascal Lambrechts is described next \cite{Pascal}. Lambrechts proves that if $X$ is a coformal, $1$-connected CW complex of finite type, and is hyperbolic, then the rational Betti numbers of the free loop space have exponential growth. Examples are wedges of two spheres each of dimension greater than $1$.
(Aside: Let X be a simply connected CW complex with rational
cohomology of finite type. Let $\Lambda( V; d)$ denote the Sullivan minimal model for $X$. Then $\Lambda( V; d)$  is said to be coformal provided $d^2(V) \subset \Lambda^2V.$ )

\

By Theorem $1.3$ in \cite{bbcg.rational},  (corrected below),  either $Z_K$ is rationally homotopy equivalent to a finite product of odd spheres in which case $Z_K$ is elliptic, or rationally 
 $Z_K$  has a wedge of two spheres 
both of dimension greater than one as a retract in which case, it is hyperbolic. The structure of the minimal non-faces
determines whether the moment-angle complex is elliptic or hyperbolic.

A related result holds for the Davis-Januszkiewicz spaces and mild generalizations.

\begin{thm} \label{thm:2}

Let $X =  DJ(K) \  \mbox {or} \ X = ET^m \times_{T^q}Z_K $ where $T^q \subset T^m$. Then if the 
space $Z(K;(D^2,S^1))$ is elliptic (and so has totally finite rational homotopy groups), the Hilbert-Poincar\'e 
series for the rational homology of $LX$ has sub-exponential growth. 
\end{thm}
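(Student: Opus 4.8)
The plan is to recognize $X$ as the total space of a fibration over a torus classifying space with fibre $Z_K$, push that fibration through the free loop space functor, and then estimate the rational Betti numbers of $LX$ by a Serre spectral sequence in which the fibre is controlled by Theorem~\ref{thm:rationality.first.result} and the base by the (polynomially growing) rational homology of $L(BT^r)$. First I would put the two cases on a common footing: if $X=ET^m\times_{T^q}Z_K$ with $T^q\subset T^m$, then $T^q$ acts freely on the contractible space $ET^m$, so $ET^m$ is a model for $ET^q$ and $X\simeq ET^q\times_{T^q}Z_K$, the Borel construction; the choice $q=m$ recovers $DJ(K)$. Hence in all cases $X$ sits in a fibration $Z_K\to X\to BT^r$ with $r=q$ (respectively $r=m$), all three spaces being simply connected since $Z_K$ is $2$-connected. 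Applying $\mathrm{Map}(S^1,-)$, which preserves fibrations, produces a fibration
\[
LZ_K\longrightarrow LX\longrightarrow L(BT^r).
\]

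Next I would analyse the two ends of this fibration. Since $BT^r\simeq K(\mathbb{Z}^r,2)$ and $LK(A,n)\simeq K(A,n)\times K(A,n-1)$, the base is $L(BT^r)\simeq BT^r\times T^r$, with $H^*(L(BT^r);\mathbb{Q})\cong\mathbb{Q}[x_1,\dots,x_r]\otimes\Lambda(y_1,\dots,y_r)$, $|x_i|=2$, $|y_i|=1$; in particular $\sum_{p\le n}\dim_{\mathbb{Q}}H_p(L(BT^r);\mathbb{Q})$ grows only polynomially in $n$, and $L(BT^r)$ admits a CW structure with polynomially many cells in each dimension. For the fibre, the hypothesis that $Z_K$ is elliptic together with Theorem~\ref{thm:rationality.first.result} shows that $H_*(LZ_K;\mathbb{Q})$ has sub-exponential growth, and it is of finite type because $Z_K$ is a simply connected finite complex. (Note that one cannot simply assert $X$ itself is elliptic, since $DJ(K)$ typically has infinite-dimensional rational cohomology.)

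I would then run the homological Serre spectral sequence of $LZ_K\to LX\to L(BT^r)$ over $\mathbb{Q}$. Although $\pi_1(L(BT^r))\cong\mathbb{Z}^r$ may act nontrivially on $H_q(LZ_K;\mathbb{Q})$, this module is finite dimensional over $\mathbb{Q}$, so computing with the cellular chain complex of $L(BT^r)$ with local coefficients gives
\[
\dim_{\mathbb{Q}}E^2_{p,q}=\dim_{\mathbb{Q}}H_p\bigl(L(BT^r);H_q(LZ_K;\mathbb{Q})\bigr)\le \mathrm{poly}(p)\cdot\dim_{\mathbb{Q}}H_q(LZ_K;\mathbb{Q}).
\]
Since $E^\infty$ is a subquotient of $E^2$ and $H_n(LX;\mathbb{Q})$ is filtered with associated graded $\bigoplus_{p+q=n}E^\infty_{p,q}$, summing over the total degree yields $\dim_{\mathbb{Q}}H_n(LX;\mathbb{Q})\le\mathrm{poly}(n)\cdot\sum_{q\le n}\dim_{\mathbb{Q}}H_q(LZ_K;\mathbb{Q})$, which is still sub-exponential since multiplying a sub-exponentially growing sequence by a polynomial preserves sub-exponential growth. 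Hence the Hilbert-Poincar\'e series of $H_*(LX;\mathbb{Q})$ has sub-exponential growth, as asserted.

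I expect the only genuine obstacle to be the bookkeeping in this last step: confirming that the possibly nontrivial monodromy of $LZ_K\to LX\to L(BT^r)$ does not spoil the count (this is exactly what the displayed inequality does, using nothing about the monodromy beyond finite dimensionality of $H_q(LZ_K;\mathbb{Q})$ and the polynomial cell count of $L(BT^r)\simeq BT^r\times T^r$), and checking against the growth notion of \cite{mp} that the final estimate is genuinely sub-exponential. As an alternative one could work with the fibration $\Omega X\to LX\to X$, which has trivial monodromy because $X$ is simply connected: then $H_*(X;\mathbb{Q})$ has polynomial growth by the Serre spectral sequence of $Z_K\to X\to BT^r$, and $H_*(\Omega X;\mathbb{Q})$ has polynomial growth because ellipticity of $Z_K$ makes $\pi_*(\Omega Z_K)\otimes\mathbb{Q}$ finite dimensional, hence $H_*(\Omega Z_K;\mathbb{Q})$ of polynomial growth by Milnor--Moore and Poincar\'e--Birkhoff--Witt, and then so is $H_*(\Omega X;\mathbb{Q})$ via $\Omega Z_K\to\Omega X\to T^r$; this variant even shows $H_*(LX;\mathbb{Q})$ has polynomial growth.
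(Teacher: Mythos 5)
Your argument is correct, but your primary route differs from the paper's. The paper works with the evaluation fibration $\Omega X \to LX \to X$: citing the splitting $\Omega\big(DJ(K)\big) \simeq \Omega\big(Z(K;(D^2,S^1))\big)\times T^m$, it observes that in the elliptic case $H^*(\Omega DJ(K);\mathbb{Q})$ is a tensor product of a polynomial and an exterior algebra, so the $E_2$-term of the Serre spectral sequence is (polynomial) $\otimes$ (exterior) $\otimes$ (Stanley--Reisner ring), which already has sub-exponential (indeed polynomial) growth; the base $X$ is simply connected, so no monodromy issue arises, and the case $ET^m\times_{T^q}Z_K$ is treated the same way. You instead apply $\mathrm{Map}(S^1,-)$ to the Borel fibration $Z_K \to X \to BT^r$, getting $LZ_K \to LX \to L(BT^r)\simeq BT^r\times T^r$, feed in Theorem~\ref{thm:rationality.first.result} for the fibre, and control the possibly nontrivial $\pi_1(L(BT^r))\cong\mathbb{Z}^r$ action purely by a dimension count over a cell structure with polynomially many cells per degree --- a point you handle correctly and which the paper's choice of fibration simply sidesteps. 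Your route has the merit of treating $DJ(K)$ and $ET^m\times_{T^q}Z_K$ uniformly via the observation $ET^m\times_{T^q}Z_K\simeq ET^q\times_{T^q}Z_K$ and of quoting Theorem~\ref{thm:rationality.first.result} as a black box; the paper's route (which is essentially your ``alternative'' in the last paragraph) avoids monodromy altogether and makes the polynomial-growth conclusion visible directly from the $E_2$-term, since every tensor factor there is a finitely generated graded algebra. Both arguments in fact yield polynomial, not just sub-exponential, growth.
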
 

\begin{remex} 
  Let $K$ be the simplicial complex consisting of two disjoint points and $Q$ a simplicial complex with
  one edge and a disjoint point. Then, $Z(K;(D^{2},S^{1})) = S^{3}$ is elliptic, and  
  $Z(Q;(D^{2},S^{1}))$ is a wedge of spheres and so is hyperbolic. Further, 
$$ DJ(K) \simeq \mathbb{CP}^{\infty} \vee \mathbb{CP}^{\infty}.$$

\nd  On the other hand, the Hilbert-Poincar\'e 
series for the rational homology of
$$LDJ(Q) \simeq  L\big(\big(\mathbb{CP}^{\infty} \times  \mathbb{CP}^{\infty}\big) \vee \mathbb{CP}^{\infty}\big)$$
\nd may have exponential growth.
\end{remex} 

Since the Hochschild homology of the cohomology ring for
$DJ(K)$ is the cohomology of the free loop space of $DJ(K)$ 
as a special case of \cite{Goodwillie}, the next result follows.

\begin{cor} \label{cor:Hochschild of face ring}

The Hochschild homology of the Stanley-Reisner ring (or face ring of $K$) has Hilbert-Poincar\'e series 
having 
sub-exponential growth,   if the space $Z(K;(D^2,S^1))$ is elliptic. 
Furthermore, if $K$ is a polytopal sphere, the Hochschild homology of the Stanley-Reisner ring has Hilbert-Poincar\'e series which is a rational function if the simple polytope $P^n(K)$ is a product of simplices. 

\end{cor}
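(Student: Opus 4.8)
The plan is to move everything across the identification recorded just above this corollary: since $DJ(K)$ is formal, Goodwillie's theorem gives an isomorphism $HH_*\!\big(H^*(DJ(K);\mathbb{Q})\big) \cong H^*(LDJ(K);\mathbb{Q})$, and as $H^*(DJ(K);\mathbb{Q})$ is the face ring $\mathbb{Q}[K]$ this reads $HH_*(\mathbb{Q}[K]) \cong H^*(LDJ(K);\mathbb{Q})$ as graded $\mathbb{Q}$-vector spaces, once the Hochschild bidegree is collapsed to total degree. Because all spaces in sight are of finite type over $\mathbb{Q}$, it follows that the Hilbert-Poincar\'e series of $HH_*(\mathbb{Q}[K])$ equals the Hilbert-Poincar\'e series of the rational homology of $LDJ(K)$. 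After this is in place, both assertions are read off from statements already available for $LDJ(K)$.

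For the first assertion I would apply Theorem \ref{thm:2} with $X = DJ(K)$: if $Z(K;(D^2,S^1))$ is elliptic, that theorem asserts that the Hilbert-Poincar\'e series of the rational homology of $LDJ(K)$ has sub-exponential growth, and by the previous paragraph this series is exactly that of $HH_*(\mathbb{Q}[K])$.

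For the second assertion, suppose $K$ is a polytopal sphere with $P^n(K)$ a product of simplices $\Delta^{n_1} \times \cdots \times \Delta^{n_k}$. By Corollary \ref{cor:rationality for polytopal macs} the space $Z_K$ is then elliptic; more precisely, since polyhedral products carry the join decomposition $K = \partial\Delta^{n_1} * \cdots * \partial\Delta^{n_k}$ to a Cartesian product, one has $Z_K \simeq S^{2n_1+1} \times \cdots \times S^{2n_k+1}$ and $DJ(K) \simeq DJ(\partial\Delta^{n_1}) \times \cdots \times DJ(\partial\Delta^{n_k})$, with each factor $DJ(\partial\Delta^{n_i})$ carrying the finite-dimensional minimal Sullivan model $\big(\Lambda(x_1,\dots,x_{n_i+1},y_i),d\big)$, $|x_j| = 2$, $|y_i| = 2n_i+1$, $dy_i = x_1\cdots x_{n_i+1}$. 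Hence $DJ(K)$ is rationally elliptic and of finite type, and by the classical analysis of free loop spaces of rationally elliptic spaces (see \cite{fht}) the rational homology of $LDJ(K)$ has a Hilbert-Poincar\'e series that is a rational function. Equivalently, one can argue directly by combining the K\"unneth isomorphism $HH_*(\mathbb{Q}[K]) \cong \bigotimes_{i} HH_*\!\big(\mathbb{Q}[x_1,\dots,x_{n_i+1}]/(x_1\cdots x_{n_i+1})\big)$ with the explicit computation of the Hochschild homology of a hypersurface ring, each factor then contributing a rational function and hence the product being rational. In either case this rational function is the Hilbert-Poincar\'e series of $HH_*(\mathbb{Q}[K])$, as claimed.

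The step I expect to need the most care is the first one: one must check that Goodwillie's theorem, together with the formality of $DJ(K)$ (so that the rational cochain algebra is quasi-isomorphic to $\mathbb{Q}[K]$ and Hochschild homology is invariant under quasi-isomorphism), yields an honest isomorphism of graded vector spaces, not merely a comparison of growth rates, so that the Hilbert-Poincar\'e series themselves may be equated. Everything downstream of that is bookkeeping, the only subsidiary point being the passage in the second assertion from ``$P^n(K)$ is a product of simplices'' to ``$DJ(K)$ is rationally elliptic'' and the invocation of the rationality of the Poincar\'e series of the free loop space of an elliptic space of finite type.
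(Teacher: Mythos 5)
Your reduction is the same one the paper uses: Goodwillie's theorem \cite{Goodwillie}, together with the formality of $DJ(K)$, identifies $HH_*(\mathbb{Q}[K])$ with $H^*(LDJ(K);\mathbb{Q})$, and the sub-exponential growth assertion is then read off from Theorem \ref{thm:2} applied to $X=DJ(K)$; that is exactly the one-sentence derivation intended in the text. Where you diverge is the rational-function assertion, for which the paper supplies no explicit argument beyond the Goodwillie identification (indeed Section \ref{elliptic case} only \emph{conjectures} rationality of the series for $L(ET^m\times_{T^q}Z_K)$ in the elliptic case), so a proof has to be supplied, as you attempt. Of your two routes, the appeal to ``the classical analysis of free loop spaces of rationally elliptic spaces'' would fail as stated: when $P^n(K)$ is a product of simplices, $DJ(K)$ does have finite-dimensional rational homotopy (your Sullivan model $\Lambda(x_1,\dots,x_{n_i+1},y_i)$, $dy_i=x_1\cdots x_{n_i+1}$, is correct), but its rational cohomology is the face ring and is infinite-dimensional, so $DJ(K)$ is not elliptic in the F\'elix--Halperin sense and the standard free-loop results for elliptic spaces (which in any case control growth of Betti numbers rather than rationality of the generating series) do not apply directly; note that the paper's own proof of Theorem \ref{thm:2} extracts only sub-exponential growth from the Serre spectral sequence of $\Omega DJ(K)\to LDJ(K)\to DJ(K)$, precisely because the differentials are not controlled there. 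Your second route is the one that actually closes the argument and is more informative than anything in the paper: $\mathbb{Q}[K]$ is the tensor product over $\mathbb{Q}$ of the hypersurface rings $\mathbb{Q}[x_1,\dots,x_{n_i+1}]/(x_1\cdots x_{n_i+1})$, Hochschild homology of a tensor product of flat algebras over a field satisfies a K\"unneth isomorphism, and each hypersurface factor has Hochschild homology with rational Hilbert series (via the eventually periodic resolution available for hypersurfaces, or any of the explicit computations for such monomial hypersurfaces). You should present that algebraic computation, with a precise reference, as the proof of the second assertion rather than describing the two routes as ``equivalent,'' since only this one is valid.
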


A related question is to work out the precise cohomology of $LX$.
In the paper \cite{fadh}, Fadell and Husseini computed the cohomology ring of $LM$ for $M$ a sphere or a complex projective space.  The  Chas--Sullivan rings of the homology of these $M$ have been computed by Cohen, Jones and Yan in \cite{cjy}.  Using more elementary means, the calculation has been done also by N.~Seeliger \cite{Seeliger}. 
In the special case for which $Z(K;(D^2, S^1))$ is rationally elliptic, the homology of the free
loop space is just that of a product of odd dimensional spheres with a product of pointed loop spaces of odd dimensional spheres.  To work out the homology of $LDJ(K)$ in the rationally elliptic case, it suffices to work out the differentials
in the spectral sequence for $L(Z(K;(D^2, S^1))) \to L(DJ(K)) \to L( \mathbb C \mathbb P(\infty))^m$ where there is a homotopy equivalence $$ L( \mathbb C \mathbb P(\infty))^m \to   \mathbb C \mathbb P(\infty)^m \times (S^1)^m.$$ 
\

The examples above arise from Ganea's fibration $$S^3 \to  \C \mathbb P^{\infty} \vee \C \mathbb P^{\infty} \to \C \mathbb P^{\infty} \times \C \mathbb P^{\infty}.$$ In this case $K$ has two vertices without an edge between the vertices, $Z_K = S^3$, and $DJ(K) = \C \mathbb P^{\infty} \vee \C \mathbb P^{\infty}$. The upshot is that Hilbert-Poincar\'e series for 
$L(\C \mathbb P^{\infty} \vee \C \mathbb P^{\infty})$  has sub-exponential growth. 

\

\section{The dichotomy for $Z(K;(D^{2},S^{1}))$: a correction to \cite[\;Theorem $1.3\hspace{0.6mm}$]{bbcg.rational}}\label{sec:correction}

In the paper \cite{bbcg.rational}, a result from \cite{bj} is used to prove that the moment-angle complex
$Z(K;(D^{2},S^{1}))$ is rationally elliptic if and only if it is the product of odd spheres and a disk. This
occurs if and only if $K$ is the iterated join of simplices and boundaries of simplices.

Recently, counterexamples to the relevant result from \cite{bj} have appeared in the literature. This 
necessitates a repair to \cite[Theorem $1.3$]{bbcg.rational} which is included below. Our goal is to prove
that if a simplicial complex $K$ does not have pairwise disjoint non-faces, then rationally, $Z(K;(D^{2},S^{1}))$
has a wedge of odd spheres as a retract, and so it will be rationally hyperbolic. Notice that, by
\cite[Corollary $2.7$]{bbcg.rational},  the hypothesis here is equivalent to $K$ not being the iterated 
join of simplicies and boundaries of simplices. The next proposition will reduce the proof to a simple induction.

\begin{defin}
Let $\mathcal{A}_{m}$ be the collection of all simplicial complexes on $m$ vertices which have
a pair of intersecting minimal non-faces, but no proper full subcomplex with that property.
\end{defin}

\begin{remex}
Let $m=4$ and $K$ have minimal non-faces corresponding to relations in the Stanley-Reisner ring:
$v_{1}v_{2}v_{3}$, $v_{1}v_{2}v_{4}$ and $v_{1}v_{4}$. Here, $K$ has no proper full subcomplex
with intersecting non-faces.
\end{remex}

\begin{prop}\label{prop:am}
Let $K \in \mathcal{A}_{m}$, then $Z(K;(D^{2},S^{1}))$ has a wedge of odd spheres as a retract.
\end{prop}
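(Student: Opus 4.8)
The plan is to exhibit a wedge of exactly two odd spheres as a genuine (not merely rational) retract of $Z_K:=Z(K;(D^{2},S^{1}))$, built directly from the two intersecting minimal non-faces of $K$. Let $\sigma,\tau$ be intersecting minimal non-faces. The first and only use of the hypothesis $K\in\mathcal{A}_{m}$ is to note that $\sigma\cup\tau=[m]$: indeed $\sigma$ and $\tau$ are still intersecting minimal non-faces of the full subcomplex $K_{\sigma\cup\tau}$, so by the definition of $\mathcal{A}_{m}$ that subcomplex is not proper. Now for any minimal non-face $\rho$ of $K$ one has $K_{\rho}=\partial\Delta^{|\rho|-1}$, hence $Z(K_{\rho};(D^{2},S^{1}))=\partial\bigl((D^{2})^{|\rho|}\bigr)\cong S^{2|\rho|-1}$; and since $K_{\rho}$ is a \emph{full} subcomplex, the coordinate inclusion $Z_{K_{\rho}}\hookrightarrow Z_{K}$ (pad the missing coordinates with the basepoint of $S^{1}$) and the coordinate projection $Z_{K}\to Z_{K_{\rho}}$ present $S^{2|\rho|-1}$ as a retract of $Z_{K}$. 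Applying this with $\rho=\sigma$ and $\rho=\tau$ yields retractions $r_{\sigma}\colon Z_{K}\to S^{a}$, $r_{\tau}\colon Z_{K}\to S^{b}$ with sections $s_{\sigma},s_{\tau}$, where $a=2|\sigma|-1$ and $b=2|\tau|-1$ are odd and at least $3$.

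Next I would merge these into a single retraction onto $S^{a}\vee S^{b}$. Form $g=(r_{\sigma},r_{\tau})\colon Z_{K}\to S^{a}\times S^{b}$ and lift it through the inclusion $j\colon S^{a}\vee S^{b}\hookrightarrow S^{a}\times S^{b}$; the homotopy fibre $F$ of $j$ is $(a+b-2)$-connected, since the first Whitehead product of the wedge generators lies in degree $a+b-1$. The obstructions to a lift live in $H^{k}\bigl(Z_{K};\pi_{k-1}(F)\bigr)$ with $k\ge a+b$, but $\dim Z_{K}=m+\dim K+1\le 2m-1$ (as $K$ is not a simplex) while $a+b=2m+2|\sigma\cap\tau|-2\ge 2m$, so all these groups vanish and a lift $\bar r\colon Z_{K}\to S^{a}\vee S^{b}$ exists. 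Finally take $s\colon S^{a}\vee S^{b}\to Z_{K}$ equal to $s_{\sigma}$ on $S^{a}$ and $s_{\tau}$ on $S^{b}$. Then $j\circ\bar r\circ s$ restricted to $S^{a}$ is $(\mathrm{id}_{S^{a}},\,r_{\tau}s_{\sigma})$, and $r_{\tau}s_{\sigma}$ is nullhomotopic because $s_{\sigma}(S^{a})=Z_{K_{\sigma}}$ carries the basepoint of $S^{1}$ in every coordinate of $\tau\setminus\sigma\neq\emptyset$, so it factors through the contractible space $(D^{2})^{\sigma\cap\tau}$. Hence $\bar r\circ s$ agrees with the identity on each wedge summand, so $\bar r\circ s\simeq\mathrm{id}_{S^{a}\vee S^{b}}$ and $S^{a}\vee S^{b}$ is a retract of $Z_{K}$.

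I expect the lifting step to be the crux: it rests on the inequality $\dim Z_{K}\le a+b-1$, which after the reductions amounts to $|\sigma\cap\tau|\ge1$ and therefore depends entirely on having forced $\sigma\cup\tau=[m]$ from the minimality in $\mathcal{A}_{m}$. For a general simplicial complex with a pair of intersecting minimal non-faces this fails, and one instead first retracts $Z_{K}$ onto $Z_{K_{I}}$ for a full subcomplex $K_{I}$ lying in $\mathcal{A}_{|I|}$, applies the present Proposition to $K_{I}$, and uses that a retract of a retract is a retract — this is the simple induction alluded to after the statement. The only remaining loose ends are the standard facts that $F$ is $(a+b-2)$-connected and that inclusion/projection really do split off a full subcomplex, neither of which presents a difficulty.
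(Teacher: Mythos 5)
Your proof is correct, and it takes a genuinely different route from the paper's. Both arguments begin the same way: the defining property of $\mathcal{A}_{m}$ forces the vertex set to be $\sigma\cup\tau$, and the full subcomplexes on $\sigma$ and $\tau$ give coordinate retractions of $Z_{K}$ onto odd spheres $S^{a}$, $S^{b}$ (the paper cites \cite[Theorem 2.2.3]{ds} for exactly this). From there the paper goes the other direction: it maps the wedge $S^{a}\vee S^{b}$ \emph{into} $Z_{K}$, invokes the stable splitting of \cite{bbcg.1} to see that any attaching maps of the remaining cells onto this wedge are stably trivial, and then uses Hilton--Milnor together with the fact that the only stably trivial, rationally non-trivial classes lie in $\pi_{4q-1}(S^{2q})$, combined with the same dimension count $\dim Z_{K}=2m-1$, to rule out rational attachments --- concluding only that the wedge is a \emph{rational} retract, which is all that is needed for the hyperbolicity application. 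You instead assemble the two coordinate retractions into $g=(r_{\sigma},r_{\tau})\colon Z_{K}\to S^{a}\times S^{b}$ and compress $g$ into the wedge using the $(a+b-1)$-connectivity of the pair $(S^{a}\times S^{b},S^{a}\vee S^{b})$ and the inequality $\dim Z_{K}\le 2m-1<a+b$ forced by $\sigma\cap\tau\neq\emptyset$; the nullhomotopy of $r_{\tau}s_{\sigma}$ through $(D^{2})^{\sigma\cap\tau}$ then makes $\bar r$ a genuine homotopy retraction. This yields the stronger integral statement (hence the rational one), and avoids both the stable splitting and rational homotopy of spheres; the one small point you leave implicit is the cancellation of $j$ at the end, i.e.\ that $\pi_{a}(S^{a}\vee S^{b})\to\pi_{a}(S^{a}\times S^{b})$ is injective, which is immediate from Hilton--Milnor since the Whitehead-product summands start in degree $a+b-1>a,b$. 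Your closing remark about handling general $K$ by first retracting onto a full subcomplex in some $\mathcal{A}_{|I|}$ is exactly the induction the paper runs in Theorem \ref{thm:induction}.
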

\begin{proof}
Suppose that $K$ has minimal intersecting non-faces corresponding to the following relations in the 
Stanley-Reisner ring
$$v_{1}\cdots v_{k}w_{1}\cdots w_{t}\quad\text{and}\quad u_{1}\cdots u_{r}w_{1}\cdots w_{t}.$$

\nd (Notice that minimality dictates that $k$, $t$ and $r$ are all $\geq 1$.) 
It follows that the vertex set of $K$ must be
\begin{equation}\label{eqn:vertices}
\big\{v_{1},\ldots, v_{k},u_{1},\ldots,u_{r},w_{1},\ldots,w_{t}\big\}
\end{equation}

\nd for otherwise, removing a vertex from $K$, which is not among these, will produce a proper full
subcomplex contradicting $K \in \mathcal{A}_{m}$.  Next, setting
$$I =\big\{v_{1},\ldots, v_{k},w_{1},\ldots,w_{t}\big\} \quad\text{and}\quad 
J = \big\{u_{1},\ldots,u_{r},w_{1},\ldots,w_{t}\big\}$$

\nd gives retractions off $Z(K;(D^{2},S^{1}))$:
$$Z_{K_{I}} = S^{2(k+t) -1} \quad\text{and}\quad Z_{K_{J}} = S^{2(r+t) -1}.$$

\nd corresponding to the full subcomplexes $K_{I}$ and $K_{J}$,  \cite[Theorem $2.2.3$]{ds} . The stable splitting theorem of \cite{bbcg.1} distinguishes these two spheres. This gives a map
$$S^{2(k+t) -1} \vee S^{2(r+t) -1} \longrightarrow Z(K;(D^{2},S^{1})).$$

\nd It remains to show that rationally, no cells are attached to this wedge of spheres inside $Z(K;(D^{2},S^{1}))$.
Now, the results of \cite{bbcg.1} imply that all non-trivial attaching maps to this wedge of spheres must be 
{\em stably trivial\/}.
The Hilton-Milnor theorem, \cite[Theorem $4.3.2$]{jn}, gives
\begin{align*}\pi_{n}(S^{2(k+t) -1} \vee S^{2(r+t) -1})&\cong \pi_{n}(S^{2(k+t) -1})\oplus  \pi_{n}(S^{2(r+t) -1})\oplus 
\pi_{n}\big(\Sigma(S^{2(k+t) -2} \wedge S^{2(r+t) -2})\big)\\
&\oplus_{j \geq 2} \pi_{n}\big(\Sigma(S^{2j(k+t) -j} \wedge S^{2(r+t) -1})\big).                       
\end{align*}

\nd The rational homotopy groups of spheres is well known. The only stably trivial non-trivial classes occur in the groups
$\pi_{4q-1}(S^{2q})$.  In the decomposition above, this requires
$$n \geq  4(2k+3t+r-1)-1.$$

\nd The vertex set of $K$ is given by \eqref{eqn:vertices} and so the largest cell possible in $Z(K;(D^{2},S^{1}))$
has dimension $2(k+r+t)-1$. Now
$$2(k+r+t)-1 < 4(2k+3t+r-1)-1$$

\nd because $k$, $t$ and $r$ are all $\geq 1$. So rationally, no non-trivial attaching map is possible. \end{proof}

An induction argument now gives the result.

\begin{thm}\label{thm:induction}
Let $K$ be a simplicial complex which contains a pair of minimal intersecting non-faces, then $Z(K;(D^{2},S^{1}))$
is rationally hyperbolic.
\end{thm}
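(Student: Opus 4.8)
The plan is to reduce Theorem~\ref{thm:induction} to Proposition~\ref{prop:am} by an induction on the number of vertices $m$ of $K$. The key observation is that the class $\mathcal{A}_m$ was defined precisely to capture the ``minimal'' simplicial complexes carrying a pair of intersecting minimal non-faces: if $K$ itself belongs to $\mathcal{A}_m$, then Proposition~\ref{prop:am} directly exhibits a wedge of two odd spheres of dimension greater than one as a rational retract of $Z(K;(D^2,S^1))$, and hence $Z(K;(D^2,S^1))$ is rationally hyperbolic since it rationally retracts onto a hyperbolic space (this is the retraction principle used throughout, cf.\ the discussion following Theorem~\ref{thm:rationality.first.result}).

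For the inductive step, suppose $K$ has a pair of minimal intersecting non-faces but $K \notin \mathcal{A}_m$. By the definition of $\mathcal{A}_m$, there is then a proper full subcomplex $K' = K_W$ on a vertex subset $W \subsetneq \{v_1,\ldots,v_m\}$ which still has a pair of intersecting minimal non-faces. (One should note that a minimal non-face of $K$ that lies entirely within $W$ remains a minimal non-face of the full subcomplex $K_W$, so the intersecting pair is genuinely inherited.) By the induction hypothesis applied to $K_W$ — which has strictly fewer vertices — the moment-angle complex $Z(K_W;(D^2,S^1))$ is rationally hyperbolic. Finally, invoke \cite[Theorem~$2.2.3$]{ds} (already cited in the proof of Proposition~\ref{prop:am}): for any full subcomplex $K_W \subset K$, the moment-angle complex $Z(K_W;(D^2,S^1))$ is a retract of $Z(K;(D^2,S^1))$. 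Since rational hyperbolicity passes to spaces that retract onto a hyperbolic space (again via Lambrechts' retraction observation, or simply because a retract's rational homotopy is a direct summand), $Z(K;(D^2,S^1))$ is rationally hyperbolic. The base case is handled by the minimality: the smallest $m$ for which some $K$ on $m$ vertices has a pair of intersecting minimal non-faces forces $K \in \mathcal{A}_m$, so Proposition~\ref{prop:am} applies.

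I do not expect a serious obstacle here; the proposition has already done the geometric work. The one point requiring a little care is the bookkeeping in the inductive step: one must check that ``having a pair of intersecting minimal non-faces'' is a property that descends to an appropriate proper full subcomplex whenever $K \notin \mathcal{A}_m$, which is essentially the contrapositive of the defining clause of $\mathcal{A}_m$ (``no proper full subcomplex with that property''). Concretely, if every proper full subcomplex of $K$ fails to have intersecting minimal non-faces while $K$ does, then $K \in \mathcal{A}_m$ by definition — so the induction is set up so that this alternative is exactly the base of the recursion. It then remains only to record that rational hyperbolicity is inherited by a space admitting a rational (indeed honest) retraction onto a rationally hyperbolic space, which follows since $\pi_*(-)\otimes\mathbb{Q}$ of the retract injects as a direct summand and exponential growth of a summand forces exponential growth of the whole.
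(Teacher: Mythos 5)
Your proposal is correct and follows essentially the same induction as the paper's own proof: if $K \in \mathcal{A}_{m}$ apply Proposition~\ref{prop:am}, and otherwise pass to a proper full subcomplex with intersecting minimal non-faces, invoke the retraction of \cite[Theorem $2.2.3$]{ds}, and conclude from the induction hypothesis since rational hyperbolicity is inherited from a retract. The only cosmetic difference is the base case, where the paper checks the three-vertex complexes directly while you observe that a vertex-minimal example automatically lies in $\mathcal{A}_{m}$; either works.
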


\begin{proof}
It is straightforward to check that all simplicial complexes on three vertices, which have pairwise intersecting 
non-faces have a wedge of spheres as a retract and so are rationally hyperbolic.

Suppose by way of induction, that all simplicial complexes with fewer than $m$ vertices,
which have pairwise intersecting non-faces, have a wedge of spheres as a rational retract.  Let
$K$ be a simplicial complex on m vertices, which has pairwise intersecting non-faces. If
$K \in \mathcal{A}_{m}$, the result is true for $K$ by Proposition \ref{prop:am}. If $K \notin \mathcal{A}_{m}$,
then $K$ has a proper full subcomplex $L$ which has a pair of intersecting non-faces.  The induction
hypothesis and \cite[Theorem $2.2.3$]{ds} now imply the result. \end{proof}

\section{Proof of Theorem \ref{thm:rationality.first.result} }\label{Proof of Theorem 1.2}

 Assume that $ Z(K;(D^2, S^1))$ is rationally hyperbolic. Thus 
$ Z(K;(D^2, S^1))$ has a rational wedge of two simply-connected spheres as a retract.
A wedge of
two spheres is coformal by a result of Neisendorfer and Miller, \cite[page 573]{nm}. Appealing to Lambrecht's theorem 
\cite{Pascal}, the Hilbert-Poincar\'e series for the rational homology of the free loop space of $ Z(K;(D^2, S^1))$  has exponential growth as the Hilbert-Poincar\'e series for the free loop space of a wedge of two simply-connected spheres has exponential growth. Thus the rational homology of $ LZ(K;(D^2, S^1))$ has exponential growth.

\

Conversely, note that $ Z(K;(D^2, S^1))$ is rationally elliptic if and only if it is rationally homotopy equivalent to a product of odd spheres.  The free
loop space of a product of odd spheres is rationally, (or indeed after inverting $2$), homotopy equivalent to the product of odd spheres with the pointed loop space of the finite product of odd spheres.  {\em This product
has a cohomology algebra which has sub-exponential growth\/}.  

\

These remarks imply Theorem  \ref{thm:rationality.first.result} since any space of the homotopy type of a finite, 
 simply  connected CW-complex is either elliptic, or hyperbolic.

\

 \begin{remark} The calculations of the Chas--Sullivan string topology rings of $H_{\ast}(LM)$ 
for $M = S^{n}$ and 
$\mathbb{CP}^{n} $, mentioned above \cite{cjy} and \cite{Seeliger}, yield a quotient of a finitely generated free
associative algebra by an ideal. In particular, 
using the Chas--Sullivan product one sees that the homology of these free loops are rationally elliptic. 
Now $Z(K;(D^{2},S^{1}))$ is a manifold if $K$ is a triangulation of a sphere. So, the string topology rings of 
$LZ(K;(D^{2},S^{1}))$ are defined for such $K$. It follows from Theorem \ref{thm:rationality.first.result} that 
the Chas--Sullivan string topology of the free loops on a moment angle manifold 
$Z(K;(D^{2},S^{1}))$ 
cannot be a quotient of a  finitely generated free associative 
algebra unless $Z(K;(D^{2},S^{1}))$ is a product of odd spheres.
\end{remark} 

\section{Proof of Theorem \ref{thm:2}}\label{Proof of 2}

Suppose condition (1) holds, namely that the rational cohomology $LZ(K;(D^{2},S^{1})$ has sub-exponential growth.
   In this case, $Z(K;(D^{2},S^{1})$ is rationally elliptic and so, by  the results of \cite{bbcg.rational}, must
   be rationally homotopy equivalent to a product of odd spheres.  \nd Recall next,  (\cite[page 339]{buchstaber.panov.new.book}, for example), that there is a homotopy equivalence
\begin{equation}\label{eqn:splitting}
\Omega\big(DJ(K)\big)  \longrightarrow \Omega\big(Z(K;(D^{2},S^{1})\big)\big) \times T^{m}.
\end{equation}

  \nd This
 implies that the rational cohomology of $\Omega\big(DJ(K)\big)$
   is a tensor product of a polynomial algebra and an exterior algebra. Next, the Serre spectral sequence of   
   the fibration 
\begin{equation}\label{eqn:mainfibration}
\Omega\big(DJ(K)\big) \longrightarrow L\big(DJ(K)\big) \longrightarrow DJ(K)
\end{equation}
  
  \nd has an $E_{2}$ term which is a tensor product of a polynomial algebra, an
  exterior algebra and the Stanley-Reisner ring. So, the rational cohomology $L\big(DJ(K)\big)$ must have 
  sub-exponential growth. This completes the proof of the theorem  for the case of $DJ(K)$.
  The proof of  the theorem for the space  $ET^{m} \times_{T^{q}} Z_{K}$ is entirely analogous. 

 \section{Free loop spaces in the elliptic case}\label{elliptic case}
 
Assume that $Z_K = Z(K;(D^2,S^1))$ is rationally elliptic, then it is a finite product of odd dimensional spheres by
 \cite{bbcg.rational}. The free loop space $L(S^{2n+1})$ is homotopy equivalent to 
 $$S^{2n+1} \times \Omega(S^{2n+1})$$ as long as the prime $2$ has been inverted.
In this case of $L(Z_K)$, the free loop space, is  a product of free loop spaces of odd dimensional spheres.   
 \
 
One remark is that the natural spectral sequence for $$L(DJ(K)) \to L( \mathbb C \mathbb P^{\infty})^m$$ 
frequently supports a non-trivial differential as in the case of the free loops of Ganea's fibration $$L(S^3) \to  
L(\C \mathbb P^{\infty} \vee \C \mathbb P^{\infty}) \to L(\C \mathbb P^{\infty} \times \C \mathbb P^{\infty})$$ for which $K$ is two points, and $DJ(K) = \C \mathbb P^{\infty} \vee \C \mathbb P^{\infty}$. This differential propagates to several related cases. 

\

It is natural to conjecture that if $Z_K$ is rationally elliptic, then the Hilbert-Poincar\'e series for the free loop space
of $L(ET^m \times_{T^q}Z_K)$ is a rational function.
 
 \noindent \begin{ack} The authors are grateful to  Ran Levi and Kathryn Hess for useful
 suggestions and also to Jason McCullough.  The comments of the referee have improved
 the exposition. The first author was supported in part by  grant number 210386 from the Simons Foundation. 
 \end{ack}

\bibliographystyle{amsalpha}

\end{document}